\IfFileExists{\currfilename}{\embeddedfile{sourcefile}{\currfilename}}{}
\newtheorem{proposition}{Proposition}
\newtheorem{theorem}{Theorem}
\newtheorem{lemma}{Lemma}
\newtheorem{corollary}{Corollary}
\theoremstyle{remark}
\newtheorem{remark}{Remark}
\def\@seccntformat#1{\csname the#1\endcsname.\ } 
\newcommand\mspan[1]{\langle\!\langle #1 \rangle\!\rangle}
\newcommand\Mspan[1]{\mspan{#1}^*}
\newcommand\GF[1]{\mathbb{F}_{#1}}
\newcommand\0{\overline 0}
\title{Multifold $1$-Perfect Codes%
\thanks{The work was funded by
the Russian Science Foundation, Grant 22-11-00266, \linebreak[5]\url{https://rscf.ru/en/project/22-11-00266/}}
} 
\author{Denis S. Krotov%
\thanks{Sobolev Institute of Mathematics, Novosibirsk, Russia. E-mail: \texttt{dk@ieee.org}}
} 
\date{}
\begin{document}

\maketitle

\begin{abstract}
A multifold $1$-perfect code ($1$-perfect code for list decoding) in any graph is a set $C$ of vertices such that every vertex of the graph is at distance not more than $1$ from exactly $\mu$ elements of $C$. In $q$-ary Hamming graphs, where $q$ is a prime power, we characterize all parameters of multifold $1$-perfect codes and all parameters of additive multifold $1$-perfect codes. In particular, we show that additive multifold $1$-perfect codes are related to special multiset generalizations of spreads, multispreads, and that multispreads of parameters corresponding to multifold $1$-perfect codes always exist.

Keywords: perfect codes, multifold packing, multiple covering, list-decoding codes, additive codes, spreads, multispreads, completely regular codes, intriguing sets.
\end{abstract}

\section{Introduction}
Perfect codes play a special role
in coding theory and discrete mathematics.
They are optimal
as error-correcting codes,
as covering codes, as orthogonal arrays,
and as representatives 
of other classes of objects
related to different applications or
theoretical problems.
Multifold perfect codes generalize 
perfect codes and 
are still optimal as, 
for example, 
list-decoding codes
(see, e.g.,~\cite{Guruswami:05}, \cite{Resch:20})
or multifold
ball packings~\cite{KroPot:multifold},
multiple covering
codes~\cite{CHLL}. 

In the current paper,
we characterize
the parameters
of multifold $1$-perfect codes
in $q$-ary Hamming graphs,
where~$q$ is a prime power.
Before, such characterization
was known for the case when $q$ is prime
and for general $q$ but only for linear codes,
see~\cite[Section~14.2]{CHLL}.
As we will see, for non-prime~$q$,
not all parameters of multifold $1$-perfect codes
can be represented by linear or by the union of cosets of linear multifold $1$-perfect codes
(the first counterexample is a $5$-fold $1$-perfect $4$-ary code of length~$13$),
and the technique related to additive codes essentially
extends the class of such codes and allows to describe their parameters.
Before giving a brief survey 
of related results and formulating
the characterization theorem, 
we will define the main concepts 
such as multifold perfect codes and
Hamming graphs.

\paragraph{Main definitions.}
A set~$C$ of vertices of 
a graph
is called a \emph{$\mu$-fold} (\emph{multifold})
\emph{$e$-perfect code}
if every radius-$e$ ball
(i.e., the set of vertices 
at distance not more than $e$ 
from some fixed vertex, 
the center of the ball) 
contains exactly $\mu$ 
elements of~$C$.
To avoid trivial cases,
we also imply that $C$ 
is not empty
and does not coincide with the graph vertex set.
$1$-fold $e$-perfect codes
are called \emph{$e$-perfect codes},
or just \emph{perfect codes}.
In regular graphs,
perfect codes are a special case 
of completely regular codes 
defined in the next paragraph.

 An \emph{equitable 
 $l$-partition} of a graph
is an ordered partition $(C_1, \ldots, C_l)$ 
of its vertex set into~$l$ 
subsets, \emph{cells},
such that for every~$i$
and~$j$ from~$1$ to~$l$ 
every vertex in~$C_i$ 
has exactly~$S_{i,j}$
neighbors in~$C_j$, 
where $S_{i,j}$ depends on~$i$ and~$j$
but does not depend on the choice of the vertex in~$C_i$;
the $l$-by-$l$ matrix $(S_{i,j})_{i,j=1}^l$
is called the \emph{quotient matrix} of the equitable partition
(in some literature, the intersection matrix or just the parameter matrix).
A set~$C$ (code) of vertices of a graph
is called
\emph{completely regular}
if the distance partition $(C^{(0)},C^{(1)},\ldots,C^{(\rho)})$
with respect to~$C$
(i.e., $C^{(i)}$ is the set of vertices
at distance~$i$ from $C$)
is equitable.
The number~$\rho$ of 
the cells different from~$C$ 
in the distance partition 
is called the 
\emph{covering radius} of~$C$.
The quotient matrix corresponding
to a completely regular code 
is always tridiagonal because, obviously,
$C^{(i)}$ can have neighbors only in
$C^{(i-1)}$, $C^{(i)}$, and $C^{(i+1)}$.

\begin{remark}
 Completely regular codes with covering radius~$1$ are also known as \emph{intriguing sets}~\cite{DeBruSuz:2010}.
\end{remark}

The Hamming graph $H(n,q)$ is a graph
the vertices of which are 
the words of length~$n$ over an alphabet
of cardinality~$q$, 
two words being adjacent 
if and only if they differ 
in exactly one position.
If~$q$ is a prime power,
then the alphabet is often considered as 
the finite field~$\GF{q}$,
and the set of vertices of~$H(n,q)$
forms an $n$-dimensional 
vector space~$\GF{q}^n$
over~$\GF{q}$, 
with coordinate-wise addition 
and multiplication 
by a constant.
The subspaces of~$\GF{q}^n$
are called \emph{linear codes},
while the subgroups 
of the additive group of~$\GF{q}^n$
are known as \emph{additive codes}.
Alternatively, additive codes can
be treated as $\GF{p}$-linear subspaces,
where~$p$ is prime and $q=p^t$,
because $\GF{q}^n$ is a $tn$-dimensional vector space
over the order-$p$ subfield of~$\GF{q}$.
In the current paper, 
we mainly consider this last 
representation of the additive codes.
 
\paragraph{A mini-survey.} The problem of complete
characterization of parameters
of perfect codes 
in the Hamming graphs
$H(n,q)$ is solved only
for the case when
$q$ is a prime power~\cite{ZL:1973}, 
\cite{Tiet:1973}: there are no
nontrivial perfect codes except the
$e$-perfect repetition codes
in
$H(2e + 1, 2)$, the $3$-and
$2$-perfect Golay codes~\cite{Golay:49} in
$H(23, 2)$ and $H(11, 3)$, respectively, and the
$1$-perfect codes
in
$H((q^k-1)/(q-1),q)$. In the case of a non-prime-power $q$, no nontrivial perfect codes are known, and the parameters
for which the nonexistence is not proven 
are restricted by
$1$- and $2$-perfect codes 
(the last case is solved 
for some values of $q$), 
see~\cite{Heden:2010:non-prime} 
for a survey of known results 
in this area.

Let us consider the multifold case.
The existence 
of $\mu$-fold $e$-perfect codes
with $e$ larger than~$1$
is a complicated problem,
especially for small~$\mu$.
A survey can be found 
in~\cite[Section~14.2]{CHLL};
in~\cite{Vorobev:2012:multiple},
Vorob'ev considered
multifold $r$-perfect codes
in $H(n,2)$, $r>1$,
related to equitable
$2$-partitions.
In this paper, we focus on the case $r=1$.

For a $\mu$-fold $1$-perfect code $C$ in $H(n,q)$, 
 the code and its complement form an equitable partition with quotient matrix
$$
\left(
\begin{array}{cc}
 \mu-1 & (q-1)n-\mu+1 \\
 \mu & (q-1)n-\mu \\
\end{array}
\right).
$$
The famous Lloyd condition (see, e.g., \cite[Theorem~9.3.3]{GoRo} for equitable partitions) says that the eigenvalue $-1$ of this quotient matrix must belong to the eigenspectrum
$\{-n+iq :\ i\in\{0,\ldots,n\}\}$ of the 
adjacency matrix of the
Hamming graph $H(n,q)$,
which is equivalent to $n\equiv 1\bmod q$
(for example, by this reason
$5$-fold $1$-perfect codes
of size $2^5$ 
in $H(3,4)$ cannot exist).
Existence 
of such partitions 
in the binary case 
was shown 
in~\cite{FDF:PerfCol}, 
as a part of 
a general theory
of equitable $2$-partitions of $H(n,2)$ 
(which was partially generalized to an arbitrary~$q$ in~\cite{BKMTV}), but the sufficiency of the sphere-packing 
condition 
for the existence 
of multifold $1$-perfect codes 
was known earlier
for any prime $q$:
\begin{proposition}[{\cite[Theorem~14.2.4]{CHLL}}]\label{p:perfect}
Assume that $q$ is prime. A $\mu$-fold
$1$-perfect code of cardinality~$K$ in $H(n,q)$
exists if and only if $K=\mu q^n/|B|$ 
is integer and $\mu\le |B|$, 
where~$B$ is a radius-$1$ ball in $H(n,q)$, 
$|B|=(q-1)n+1$.
\end{proposition}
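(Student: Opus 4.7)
My plan is to handle necessity by a double count and sufficiency by an explicit construction based on multiplicity-weighted parity checks. For necessity, counting pairs $(v,c)$ with $v\in\GF{q}^n$, $c\in C$, $d(v,c)\le 1$ in two ways yields $|C|\cdot|B|=q^n\mu$, hence $K=\mu q^n/|B|$ must be an integer, and $\mu\le|B|$ is immediate since a single radius-$1$ ball contains $\mu$ codewords among its $|B|$ vertices.

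For sufficiency, let $s$ be the largest integer with $q^s\mid|B|$ and set $m=|B|/q^s$, so that $\gcd(m,q)=1$. Since $|B|=(q-1)n+1\equiv 1\pmod{q-1}$ and $q^s\equiv 1\pmod{q-1}$, I get $m\equiv 1\pmod{q-1}$, making $(m-1)/(q-1)$ a nonnegative integer. Integrality of $K$ combined with $\gcd(m,q)=1$ forces $m\mid\mu$, and $\mu\le|B|=mq^s$ then gives $\mu=tm$ for some $t\in\{1,\ldots,q^s\}$. Thus it suffices to build a linear $m$-fold $1$-perfect code $C_0$ of dimension $n-s$: its $q^s$ cosets partition $\GF{q}^n$, each is an $m$-fold $1$-perfect code by translation invariance, and the union of any $t$ of them is a $tm$-fold $1$-perfect code of cardinality $tq^{n-s}=K$.

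To construct $C_0$, I would take $C_0=\ker H$ with $H$ an $s\times n$ matrix over $\GF{q}$ whose columns are chosen as follows: exactly $(m-1)/(q-1)$ copies of the zero vector, and for each of the $(q^s-1)/(q-1)$ one-dimensional subspaces $L\le\GF{q}^s$ exactly $m$ columns picked (with repetition allowed) from the $q-1$ nonzero vectors of $L$. The identity
\[
\frac{m-1}{q-1}+m\cdot\frac{q^s-1}{q-1}=\frac{mq^s-1}{q-1}=\frac{|B|-1}{q-1}=n
\]
confirms the column count. To verify $|B_1(v)\cap C_0|=m$ for every $v$, I would count the $b\in\{0\}\cup\{\alpha e_i:\alpha\in\GF{q}^*,\,1\le i\le n\}$ with $v-b\in C_0$, i.e.\ with $Hb=Hv$: if $Hv=0$, the solutions are $b=0$ together with $\alpha e_i$ for each of the $(m-1)/(q-1)$ zero columns and each $\alpha\in\GF{q}^*$, giving $1+(q-1)(m-1)/(q-1)=m$; if $Hv$ spans a line $L$, each of the $m$ columns of $H$ lying in $L$ admits a unique $\alpha\in\GF{q}^*$ with $\alpha H_i=Hv$, again $m$ solutions.

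I expect the main obstacle to be precisely this syndrome-count verification: one has to see that the zero-syndrome contribution (one central codeword plus the $(m-1)/(q-1)$ zero columns with $q-1$ nonzero scalars each) and the nonzero-syndrome contribution ($m$ columns along a single line of $\GF{q}^s$) are engineered to coincide. In fact, the column multiplicities $(m-1)/(q-1)$ and $m$ are essentially forced by demanding this coincidence; once the base code is in place, the cosets-union step and the necessity direction are short bookkeeping.
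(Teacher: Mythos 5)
Your proof is correct and follows essentially the same route as the paper's (cited and only sketched) argument: necessity by double counting, and sufficiency by taking the union of cosets of a linear multifold $1$-perfect code defined by a parity-check matrix generalizing the Hamming code, with zero columns and with each line of $\GF{q}^s$ covered by a uniform number of nonzero columns. The only difference is bookkeeping: you normalize by the $q$-free part $m$ of $|B|$ and take $\mu/m$ cosets of an $m$-fold base code, whereas the paper's sketch (cf.~\eqref{eq:kK}) normalizes by the $q$-free part $\kappa$ of $K$; the two decompositions produce the same family of codes.
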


The sufficiency in Proposition~\ref{p:perfect} is proved by
constructing a code as the union of~$\kappa$ cosets 
of a linear $\mu/\kappa$-fold 
$1$-perfect code 
of dimension~
$k$
(defined, for example, by a parity-check matrix,
generalizing the Hamming code \cite[p.193]{MWS}), 
where
\begin{equation}\label{eq:kK}
 K=\frac{\mu q^n}{(q-1)n+1}=\kappa q^k, \qquad \gcd(\kappa,q)=1.
\end{equation}
The same argument works to construct 
$\mu$-fold $1$-perfect codes in $H(n,q)$ for any
prime-power~$q$; so, \eqref{eq:kK} is sufficient
for the existence, but not necessary: if~$q$ is not prime,
then not all integer~$K$ are representable in the form~\eqref{eq:kK} with 
$\gcd(\kappa,q)=1$. 
The smallest example (also satisfying the Lloyd condition) is $q=4$, $n=13$, $\mu=5$. In this case,
$K=2^{23}=2\cdot 4^{11}$, 
and we see that $\mu/\kappa$ is not integer
for any representation of~$K$ in the form~$\kappa 4^k$.
Therefore, a code cannot be constructed 
as the union of cosets of 
a linear multifold $1$-perfect code.
However, a $5$-fold 
$1$-perfect code in $H(13,4)$
can be constructed 
as an additive 
code, and this particular example was resolved in~\cite[Sect.\,V.B]{KroPot:multifold}.

\paragraph{Outline.}
In the current note, 
we construct 
multifold $1$-perfect codes
of any admissible parameters 
in Hamming graph~$H(n,q)$,
where~$q$ is a prime power.
In Section~\ref{s:spread},
we describe additive
completely regular codes
with covering radius~$1$
in terms of so-called multispreads,
multiset analogs of spreads,
and list some constructions
of multispreads, 
utilized in the following section.
In Section~\ref{s:perfect},
we characterize 
the parameters
of additive multifold 
$1$-perfect codes 
and show that a
multifold 
$1$-perfect code
of any admissible parameters
can be constructed
as the union of cosets of 
an additive multifold 
$1$-perfect code.

\section
[Multispreads and additive completely regular codes of covering radius 1]
{Multispreads and additive completely regular codes of covering radius $1$}
\label{s:spread}
Let $p$ be a prime power, 
$t$ and $m$ positive integers,
$q=p^t$.

The Hamming graph~$H(n,q)$ is
a graph on the set~$\Sigma^n$
of words of length~$n$
($n$-words)
over an alphabet~$\Sigma$ 
of size~$q$; 
two $n$-words are adjacent
if they differ in exactly 
one position.
We define~$\Sigma$ as the set of
$t$-words over~$\GF{p}$.
The elements of~$\Sigma^n$ 
can then be treated as $nt$-words
over~$\GF{p}$, 
where each $nt$-word
is the concatenation of~$n$ words of 
length~$t$, called \emph{blocks}.
With such treatment, 
two $nt$-words are adjacent 
in $H(n,q)$ if and only if 
they differ
in exactly one block 
(independently on the number 
of differing positions 
in that block).

The vertices of~$H(n,q)$,
with coordinate-wise addition
and multiplication by a constant
from~$\GF{p}$, 
form an $nt$-dimensional space
over~$\GF{p}$.
Any subspace of this space is called
an \emph{$\GF{p}$-linear code} in~$H(n,q)$.

\begin{remark}
In contrast,
a \emph{linear code} in~$H(n,q)$
is a subspace over 
the larger field~$\GF{q}$,
where the elements of~$H(n,q)$
are treated as $n$-words 
over~$\GF{q}$.
Since~$\GF{q}$ is itself 
a $t$-dimensional vector space 
over~$\GF{p}$, its elements
can be written as $t$-tuples
over~$\GF{p}$, in some fixed basis.
This shows that every linear code
is $\GF{p}$-linear. 
The inverse is not true if $q>p$;
in particular, 
the cardinality of an $\GF{p}$-linear code
is a power of~$p$
but is not necessarily a power of~$q$.
\end{remark}

Every subspace~$C$ of the space of
$nt$-words over~$\GF{p}$
can be represented as the kernel,
$\ker(M)$,
of an $(nt-\dim C) \times nt$
matrix~$M$, 
called a 
\emph{check matrix} 
of the $\GF{p}$-linear code~$C$.
Since we consider the positions
of $nt$-words as arranged into
blocks, the columns of a check
matrix are also naturally grouped
into $n$ groups.
For a collection 
$T_1$, \ldots,  $T_n$ 
of $t$-tuples
of vectors from $\GF{p}^m$,
we denote by 
$M(T_1, \ldots, T_n)$
the $m \times nt$ matrix 
with columns
$T_1^1$, \ldots,  $T_1^t$, $T_2^1$, \ldots,  $T_n^t$,
where $T_i=(T_i^1, \ldots, T_i^t)$.

For a given finite multiset~$V$ of vectors,
by $\mspan{V}$ we denote the multiset
$ \{\sum_{v \in V} a_v v:\ a_v\in \GF{p} \} $
of all~$p^{|V|}$ linear combinations
of elements from~$V$;
by $\Mspan{V}$ we denote the multiset
$ \{\sum_{v \in V} a_v v:\ a_v\in \GF{p} \} 
- \{\0\} $
of  all $p^{|V|}-1$
nontrivial linear combinations
(with at least one coefficient 
being nonzero)
of elements from~$V$.
Every such $\mspan{V}$
will be called a
\emph{multisubspace}
(of the vector space),
or \emph{$|V|$-multisubspace},
with a basis~$V$.

We will call a collection 
$(S_1$, \ldots,  $S_n)$ of $t$-multisubspaces
of   $\GF{p}^m$ 
a \emph{$(\lambda,\mu)$-spread},
or \emph{multispread},
 if there holds
$$
{S_1} \uplus \ldots \uplus {S_n} 
=
(n+\lambda){\times} \{\0\}
 \uplus 
 \mu{\times} \GF{p}^{m*},
$$
or, equivalently,
\begin{equation}\label{eq:lm}
{S_1}^* \uplus \ldots \uplus {S_n}^* 
=
\lambda{\times} \{\0\}
 \uplus 
 \mu{\times} \GF{p}^{m*},
 \qquad\mbox{where }
 {S_i}^*=S_i - \{\overline 0\}.
\end{equation}
We note that the definition is
`additive' in the sense that the union of a $(\lambda,\mu)$-spread and a $(\lambda',\mu')$-spread is a $(\lambda+\lambda',\mu+\mu')$-spread.
Multispreads with parameters $(0,1)$
are known as \emph{spreads}, while
more general
$(0,\mu)$-spreads are known as 
\emph{$\mu$-fold spreads};
in that case, $S_i$
are ordinary $t$-dimensional subspaces,
without multiplicity larger than~$1$.

Our first result is almost trivial.
Since it is also very important for us,
we give a detailed proof.
\begin{theorem}\label{th:iff}
Assume that 
 $T_1$, \ldots, $T_n$ are $t$-tuples
of vectors from~$\GF{p}^m$ and all the vectors
from $T_1$, \ldots, $T_n$ span~$\GF{p}^m$.
The code $\ker M$, 
$M=M(T_1, \ldots, T_n)$,
is an $\GF{p}$-linear completely regular code 
in $H(n,p^t)$ 
with quotient matrix
\begin{equation}\label{eq:LaMu}
\left(
\begin{array}{cc}
\lambda  & n(q-1)-\lambda \\
\mu     & n(q-1)-\mu 
\end{array}
\right)
\end{equation}
if and only if
$(\mspan{T_1},\ldots,\mspan{T_n})$
is a $(\lambda,\mu)$-spread.
\end{theorem}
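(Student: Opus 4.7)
The plan is to exploit the $\GF{p}$-linearity of $C=\ker M$ by computing, for an arbitrary vertex $x\in \GF{p}^{nt}$, the multiset of syndromes $My$ as $y$ ranges over the neighbors of $x$ in $H(n,p^t)$; both directions of the equivalence then fall out of a single multiset identity.

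The starting observation is that two $nt$-words $x,y$ are adjacent in $H(n,q)$ precisely when $y-x$ is supported on a single block~$i$, i.e.\ $y = x + e_i(z)$ for a unique pair $(i,z)$ with $i\in\{1,\ldots,n\}$ and $z\in \GF{p}^t\setminus\{\0\}$, where $e_i(z)$ denotes the $nt$-word with~$z$ in block~$i$ and zeros elsewhere. Applying $M$ gives $My = Mx + \sum_{j=1}^{t} z_j T_i^j$, and as $z$ runs through $\GF{p}^t\setminus\{\0\}$ the shift $\sum_j z_j T_i^j$ runs through the multiset $\Mspan{T_i}$ by definition. Taking the union over~$i$, the multiset of syndromes of all $n(q-1)$ neighbors of~$x$ equals $Mx + \biguplus_{i=1}^{n} \Mspan{T_i}$.

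A neighbor $y$ lies in $C$ iff $My=\0$, so the number of neighbors of $x$ in $C$ equals the multiplicity of $-Mx$ in $\biguplus_{i=1}^{n}\Mspan{T_i}$. The spanning hypothesis on the $T_i$ makes $x\mapsto Mx$ surjective, hence $\{Mx:x\notin C\}=\GF{p}^m\setminus\{\0\}$. The equitable-partition content of the quotient matrix \eqref{eq:LaMu} — every $x\in C$ has $\lambda$ neighbors in $C$ and every $x\notin C$ has $\mu$ neighbors in $C$ — is therefore equivalent to $\0$ having multiplicity $\lambda$ and every nonzero vector having multiplicity $\mu$ in $\biguplus_i\Mspan{T_i}$, which is exactly \eqref{eq:lm}. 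The off-diagonal entries of \eqref{eq:LaMu} are forced by the constant degree $n(q-1)$, and having a $2\times 2$ quotient matrix just says the distance partition has two cells, i.e.\ the covering radius equals $1$; this is automatic whenever $\mu\ge 1$, which holds because the spanning hypothesis precludes $M=0$ and thus forces some nonzero vector to appear in $\biguplus_i\Mspan{T_i}$.

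The main obstacle is really only one of bookkeeping: a nonzero tuple~$z$ can still satisfy $\sum_j z_j T_i^j=\0$, so that $\Mspan{T_i}$ may contain $\0$ with positive multiplicity; this is precisely what produces a positive value of $\lambda$ and what forces the use of \emph{multi}subspaces rather than ordinary subspaces in the definition of multispread.
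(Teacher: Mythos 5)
Your proposal is correct and follows essentially the same route as the paper: count code-neighbors of a vertex~$x$ by tracking syndromes of the words differing from~$x$ in one block, use the spanning hypothesis to make $x\mapsto Mx$ surjective, and identify the resulting condition with~\eqref{eq:lm}. Your single multiset identity merely repackages the paper's two cases (codeword vs.\ non-codeword), and your extra remark that $\mu\ge 1$ is a harmless refinement.
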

\begin{proof}
By the definition,
every codeword~$x$ 
satisfies $Mx =0$.
Every word at distance~$1$
from~$x$ has the form
$x-e$, where~$e$ has nonzeros
in exactly one, say $i$th,
block. 
If $x-e$ is also a codeword,
then $Me =0$, which means that
the linear combination 
of the vectors from~$T_i$
with the coefficients
from the $i$th block of~$e$ equals~$0$.
Therefore, 
\begin{itemize}
\item[(a)] the number~$\lambda$
of codeword neighbors of~$x$
is the number of times
$0$ occurs 
as a nontrivial combination 
of vectors from some~$T_i$,
$i=1,\ldots,n$:
\begin{equation}
\label{eq:La}
\lambda = \sum_{i=1}^n
\Big|\big\{(a_v)_{v\in T_i}:\ \sum_{v\in T_i} a_v v = 0, \mbox{ not all $a_v$ are zero}\big\}\Big|;
\end{equation}
note that the right part is the multiplicity of~$0$ in the multiset \linebreak[5]
$\mspan{T_1}^*\uplus\ldots\uplus\mspan{T_n}^*$.
\end{itemize}

Next, for a non-codeword~$y$,
we have $My =s_y$
for some non-zero~$s_y$.
For every codeword~$y-e$
at distance~$1$ from~$y$,
it holds $Me =s_y$.
Similarly to the arguments above,
\begin{itemize}
\item[(b)] the number~$\mu$
of codeword neighbors of~$y$
is the number of times
$s_y$ occurs
as a nontrivial combination 
of vectors from some~$T_i$,
$i=1,\ldots,n$:
\begin{equation}
\label{eq:Mu}
\mu = \sum_{i=1}^n
\Big|\big\{(a_v)_{v\in T_i}:\ \sum_{v\in T_i} a_v v = s_y\big\}\Big|;
\end{equation}
note that the right part is the multiplicity of~$s_y$ in the multiset \linebreak[5]
$\mspan{T_1}^*\uplus\ldots\uplus\mspan{T_n}^*$.
\end{itemize}
The claim of the theorem 
is now straightforward
from~(a), (b), and 
definitions.
Indeed, by the definition
\begin{itemize}
 \item[(*)] the code and its complement form an equitable
$2$-partition with quotient matrix~\eqref{eq:LaMu} (equivalently, the code is completely regular with quotient matrix~\eqref{eq:LaMu})
\end{itemize}
if and only if
\begin{itemize}
 \item every codeword~$x$ has exactly~$\lambda$
code neighbors (which constant does not depend on~$x$
because the code is additive) and
every non-codeword~$y$  has exactly~$\mu$ code neighbors,
where~$\mu$ does not depens on the choice of~$y$.
\end{itemize}
Since by the hypothesis of the theorem every
element of~$\GF{p}^m$
is represented as~$My$ for some~$y$,
from~\eqref{eq:La} and~\eqref{eq:Mu}
we see that (*) is equivalent to
\begin{itemize}
 \item $(\mspan{T_1},\ldots,\mspan{T_n})$
is a $(\lambda,\mu)$-spread,
\end{itemize}
by the definition. This completes the proof.
\end{proof}

A construction of multifold spreads
can be found in
\cite[p.83]{Hirschfeld79}.
We 
recall the following
two special cases:

\begin{lemma}\label{l:multispread}
If $t\le m$, 
then there is a 
$\frac{p^t-1}{p-1}$-fold spread
from $t$-dimensional subspaces
of~$\GF{p}^{m}$.
\end{lemma}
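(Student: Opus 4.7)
The plan is to give an explicit Singer-type construction. I would identify $\GF{p}^m$ with the additive group of $\GF{p^m}$ (viewed as an $m$-dimensional $\GF{p}$-vector space), fix a primitive element $\alpha\in\GF{p^m}^*$ of multiplicative order $p^m-1$, and pick any $t$-dimensional $\GF{p}$-subspace $S\subseteq\GF{p^m}$; such an $S$ exists because $t\le m$. Setting $n:=(p^m-1)/(p-1)$, I would propose the multispread $(S_0,\ldots,S_{n-1})$ with $S_i:=\alpha^i S$. Each $S_i$ is again a $t$-dimensional $\GF{p}$-subspace, since multiplication by the invertible element $\alpha^i$ is a $\GF{p}$-linear bijection of $\GF{p^m}$.

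The heart of the argument is a short counting step. Since $(\alpha^n)^{p-1}=\alpha^{p^m-1}=1$, we have $\alpha^n\in\GF{p}^*$, so multiplication by $\alpha^n$ preserves every $\GF{p}$-subspace and in particular $\alpha^{j+n}S=\alpha^j S$ for all $j$. Now fix a nonzero $v\in\GF{p^m}$ and count the indices $j\in\{0,1,\ldots,p^m-2\}$ with $\alpha^{-j}v\in S^*$: each $s\in S^*$ contributes a unique such $j$ via $\alpha^j s=v$, giving a total of $|S^*|=p^t-1$ values of $j$. By the previous sentence this set of $j$'s is stable under the shift $j\mapsto j+n$, so it is a disjoint union of full cosets modulo $n$, each of size $p-1$, and therefore consists of exactly $(p^t-1)/(p-1)=\mu$ cosets. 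Each such coset projects to a unique index $i\in\{0,\ldots,n-1\}$ with $v\in S_i^*$.

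Combining these observations, every nonzero $v\in\GF{p^m}$ appears exactly $\mu$ times in $S_0^*\uplus\cdots\uplus S_{n-1}^*$, while $\0$ does not appear at all, so in the notation of \eqref{eq:lm} this is precisely a $(0,\mu)$-spread, i.e., a $\mu$-fold spread built from $t$-dimensional subspaces. I do not anticipate a serious obstacle; the only real subtlety is the bookkeeping with the cosets of $\GF{p}^*=\langle\alpha^n\rangle$ inside $\langle\alpha\rangle$, and the crux is the clean observation that $\GF{p}$-linearity of $S$ collapses exactly $p-1$ of the $p^t-1$ lifted indices into each of the $\mu$ genuine indices $i\in\{0,\ldots,n-1\}$.
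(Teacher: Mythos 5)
Your construction is correct and is essentially the paper's own: the paper multiplies a fixed $t$-dimensional $\GF{p}$-subspace of $\GF{p^m}$ by a system of pairwise $\GF{p}$-independent representatives $v_1,\ldots,v_{(p^m-1)/(p-1)}$, and your choice $v_i=\alpha^{i}$, $i=0,\ldots,n-1$, with $\alpha$ primitive is exactly such a system since $\GF{p}^*=\langle\alpha^n\rangle$. Your coset-counting argument correctly supplies the verification that the paper dismisses as ``easy to see.''
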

\begin{proof}
Since $t\le m$, 
there is a collection $Y=(y_1,\ldots,y_t)$ of elements
of~$\GF{p^{m}}$ that are linearly independent over~$\GF{p}$.
Let $v_1$, \ldots, $v_{\frac{p^m-1}{p-1}}$ 
be a collection of
$|\GF{p}^{m\,*}|/|\GF{p}^*|
=\frac{p^m-1}{p-1}$ pairwise 
$\GF{p}$-independent elements
in~$\GF{p^{m}}$.
It is easy to see that 
the collection
$(Y_1$, \ldots,
$Y_{\frac{p^m-1}{p-1}})$,
where $Y_i = \langle v_i y_1,\ldots,v_i y_t \rangle$ (the span is over~$\GF{p}$),
is a $\frac{p^t-1}{p-1}$-fold spread from $t$-subspaces
of~$\GF{p^{m}}$, considered as an $m$-dimensional vector space over~$\GF{p}$.
\end{proof}

\begin{lemma}\label{l:spread}
If $t$ divides~$m'$,
then there is a spread from $\frac{p^{m'}-1}{p^t-1}$
$t$-dimensional subspaces of~$\GF{p}^{m'}$.
\end{lemma}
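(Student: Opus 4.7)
The plan is to exploit the tower of fields. Since $t$ divides $m'$, set $s = m'/t$, so that $\GF{p^{m'}}$ can be viewed as an $s$-dimensional vector space over the subfield $\GF{p^t}$. Equivalently, identify $\GF{p}^{m'}$ with $\GF{p^t}^s$ via any $\GF{p}$-basis of $\GF{p^t}$, so that scalar multiplication by elements of $\GF{p^t}$ makes sense on $\GF{p}^{m'}$.

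I would then take the multispread to consist of all the $1$-dimensional $\GF{p^t}$-subspaces of $\GF{p^t}^s$, viewed back as $\GF{p}$-subspaces. The two facts to check are: first, each such $\GF{p^t}$-line $L = \GF{p^t}\cdot v$ with $v \neq 0$ has exactly $p^t$ elements and is closed under $\GF{p}$-linear combinations, hence is a $t$-dimensional $\GF{p}$-subspace; second, any two distinct $\GF{p^t}$-lines meet only at $0$, since if $\alpha v = \beta w$ with $\alpha, \beta \in \GF{p^t}^*$, then $w$ lies on the same $\GF{p^t}$-line as $v$.

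It remains to count the lines and verify the spread condition. The nonzero vectors of $\GF{p^t}^s$ partition into $\GF{p^t}^*$-orbits under scalar multiplication, giving $(p^{m'}-1)/(p^t-1)$ orbits, each contained in a unique $1$-dimensional $\GF{p^t}$-subspace. Hence we obtain exactly $(p^{m'}-1)/(p^t-1)$ subspaces whose nonzero parts partition $\GF{p}^{m'}\setminus\{\0\}$; this is the condition \eqref{eq:lm} for a $(0,1)$-spread.

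There is no real obstacle here; the construction is the classical Desarguesian (field-reduction) spread, and the only thing one must be careful about is the bookkeeping between the $\GF{p^t}$- and $\GF{p}$-linear structures. In particular, one should note that the $t$-subspaces here are honest subspaces (each appearing with multiplicity one), so the construction produces a spread in the classical sense, a special case of the multispreads defined in \eqref{eq:lm}.
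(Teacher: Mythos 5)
Your construction is correct and is essentially the same as the paper's: the paper realizes the spread as the partition of $\GF{p^{m'}}$ into multiplicative cosets of the subfield $\GF{p^t}$, which (after adjoining $\0$ to each coset) is precisely the set of $1$-dimensional $\GF{p^t}$-subspaces you describe via field reduction. Your write-up just makes the verification of the $(0,1)$-spread condition more explicit.
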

\begin{proof}
An example of such a spread
is the partition
of the field~$\GF{p^{m'}}$ 
into multiplicative cosets 
of the subfield~$\GF{p^t}$.
\end{proof}

\begin{corollary}\label{c:spread-1}
If $t$ divides $m+s$, 
then there is a $(p^s-1,p^s)$-spread
 from 
 $\frac{p^{m+s}-1}{p^t-1}$
 $t$-dimensional multisubspaces of~$\GF{p}^{m}$.
\end{corollary}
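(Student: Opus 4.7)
The plan is to lift the situation to $\GF{p}^{m+s}$, apply Lemma \ref{l:spread} there, and then project back down to $\GF{p}^m$. Since $t\mid m+s$, Lemma \ref{l:spread} (applied with $m'=m+s$) supplies an honest spread $(S_1,\ldots,S_N)$ of $N=\frac{p^{m+s}-1}{p^t-1}$ ordinary $t$-dimensional subspaces of $\GF{p}^{m+s}$, satisfying $\biguplus_{i=1}^N S_i^*=\GF{p}^{(m+s)*}$ (as multisets, with every nonzero vector of multiplicity one).

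Next, I would fix any surjective $\GF{p}$-linear map $\pi\colon \GF{p}^{m+s}\to\GF{p}^m$, for instance projection onto the first $m$ coordinates; its kernel has size $p^s$. For each $i$, choose an ordered basis $B_i$ of $S_i$ and set $T_i=\pi(B_i)$, a $t$-tuple in $\GF{p}^m$. By $\GF{p}$-linearity of $\pi$, the multiset $\mspan{T_i}$ of all $p^t$ linear combinations of $T_i$ coincides with the multiset image $\pi(S_i)$ (each of the $p^t$ elements of $S_i$ contributing its image once). In particular $(\mspan{T_1},\ldots,\mspan{T_N})$ is a collection of $t$-multisubspaces of $\GF{p}^m$ and $\pi(S_i^*)=\mspan{T_i}^*$ as multisets for every $i$.

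The remaining step is a short multiplicity count. Applying $\pi$ termwise to $\biguplus_i S_i^*=\GF{p}^{(m+s)*}$ yields
$$\biguplus_{i=1}^N \mspan{T_i}^* \;=\; \pi\bigl(\GF{p}^{(m+s)*}\bigr).$$
Each nonzero $v\in\GF{p}^m$ has exactly $p^s$ preimages in $\GF{p}^{m+s}$, all of them nonzero, so $v$ appears with multiplicity $p^s$; the zero vector has $p^s-1$ preimages in $\GF{p}^{(m+s)*}$ (the nonzero elements of $\ker\pi$), so $\0$ appears with multiplicity $p^s-1$. Matching against \eqref{eq:lm} identifies the right-hand side as exactly a $(p^s-1,p^s)$-spread with $N=\frac{p^{m+s}-1}{p^t-1}$ summands, as required.

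I do not foresee a serious obstacle: the argument is essentially the observation that spreads behave functorially under surjective $\GF{p}$-linear maps, with the kernel of the map producing the $\lambda=p^s-1$ contribution at the origin. The only point requiring a little care is distinguishing set-theoretic unions from multiset operations. In particular, the asymmetry $\lambda=p^s-1$ versus $\mu=p^s$ arises solely because $0\in\GF{p}^{m+s}$ is excluded from $\GF{p}^{(m+s)*}$ while every other fiber of $\pi$ is counted in full.
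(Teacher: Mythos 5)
Your proposal is correct and follows exactly the paper's route: apply Lemma~\ref{l:spread} with $m'=m+s$ and project along $s$ coordinates, with the multiplicity count ($p^s$ preimages for each nonzero vector, $p^s-1$ nonzero preimages of $\0$) giving the parameters $(p^s-1,p^s)$. You have simply written out in full the details that the paper's one-line proof leaves implicit.
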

\begin{proof}
Consider a spread from Lemma~\ref{l:spread},
where $m'=m+s$.
Projecting along $s$ coordinates
turns it into 
a required $(p^s-1,p^s)$-spread.
\end{proof}

Additionally, 
an $(\alpha(p^t-1),0)$-spread, $r\in\{0,1,\ldots\}$,
can be constructed
as 
$\alpha$ copies of the trivial
multisubspace
$p^t{\times}\{0\} = \{0\}\uplus\ldots\uplus \{0\}$.
We summarize the constructions
above in the following proposition.
\begin{proposition}\label{p:sum}
Let $s$ be the smallest 
nonnegative integer 
such that $t$ divides $m+s$ (i.e., $-m\equiv s \bmod t$).
For all nonnegative integer $\alpha$, $\beta$, and $\gamma$,
\begin{itemize}
    \item[\rm(a)]
    there exists an
    $\big(\alpha(p^t-1)+\beta(p^s-1),\beta p^s \big)$-spread;
    \item[\rm(b)] if $t\le m$,
    then
    there is an
    $\big(\alpha(p^t-1)+\beta(p^s-1),\beta p^s + \gamma\frac{p^t-1}{p-1} \big)$-spread.
\end{itemize}
\end{proposition}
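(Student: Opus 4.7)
The plan is to assemble the desired multispreads from three elementary building blocks using the additivity observed right after the definition of a multispread: the concatenation of a $(\lambda,\mu)$-spread and a $(\lambda',\mu')$-spread is again a $(\lambda+\lambda',\mu+\mu')$-spread, so taking $k$ disjoint copies of a single $(\lambda_0,\mu_0)$-spread yields a $(k\lambda_0,k\mu_0)$-spread.

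The three building blocks are already in hand. First, the trivial $t$-multisubspace whose basis consists of $t$ copies of $\overline 0$ is by itself a $(p^t-1,0)$-spread of length one, since the nontrivial linear combinations of its basis form the multiset $(p^t-1)\times\{\overline 0\}$; scaling by $\alpha$ produces an $(\alpha(p^t-1),0)$-spread. Second, because $s$ was chosen so that $t\mid m+s$, Corollary~\ref{c:spread-1} supplies a $(p^s-1,p^s)$-spread, and $\beta$ disjoint copies of it form a $(\beta(p^s-1),\beta p^s)$-spread. Third, when $t\le m$, Lemma~\ref{l:multispread} supplies a $\frac{p^t-1}{p-1}$-fold spread, which in the current terminology is a $(0,\frac{p^t-1}{p-1})$-spread; then $\gamma$ copies give a $(0,\gamma\frac{p^t-1}{p-1})$-spread.

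With these pieces ready, I would prove part~(a) by concatenating the first two scaled spreads, obtaining precisely the parameters $(\alpha(p^t-1)+\beta(p^s-1),\beta p^s)$. For part~(b), I would additionally concatenate the third ingredient to contribute the extra summand $\gamma\frac{p^t-1}{p-1}$ to the $\mu$-coordinate; this step requires the hypothesis $t\le m$ solely to invoke Lemma~\ref{l:multispread}. Since every building block has already been constructed and the addition of parameters is immediate from the definition, there is no real obstacle here: the whole proof amounts to a bookkeeping verification that the stated coefficients of $\alpha$, $\beta$, $\gamma$ match the elementary contributions $(p^t-1,0)$, $(p^s-1,p^s)$, and $(0,\frac{p^t-1}{p-1})$ of the three ingredients.
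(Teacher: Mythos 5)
Your proposal is correct and matches the paper's own proof essentially verbatim: both assemble the multispread from $\alpha$ copies of the trivial $(p^t-1,0)$-spread, $\beta$ copies of the $(p^s-1,p^s)$-spread from Corollary~\ref{c:spread-1}, and (for part~(b)) $\gamma$ copies of the $(0,\frac{p^t-1}{p-1})$-spread from Lemma~\ref{l:multispread}, using additivity of parameters.
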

\begin{proof}
A required multispread is constructed as the union
of $\alpha$ copies of the
$(p^t-1,0)$-spread,
$\beta$ copies of a spread 
from Corollary~\ref{c:spread-1},
and, if $t\le m$, $\gamma$ copies
of a spread from Lemma~\ref{l:multispread}.
\end{proof}

\section
[Multifold 1-perfect codes]
{Multifold $1$-perfect codes}
\label{s:perfect}
In this section, we prove 
two main theorems of the paper,
which characterize the parameters
of additive and, respectively,
unrestricted multifold
$1$-perfect codes 
in Hamming graphs $H(n,q)$,
where $q$ is a prime power.
The claim of Theorem~\ref{th:add}
is formulated in a more
general form, 
for $\GF{p}$-linear codes,
where additive codes 
correspond to the case of prime~$p$.

\begin{theorem}\label{th:add}
Let $q=p^t$ for a prime power~$p$.
An $\GF{p}$-linear 
$\mu$-fold
$1$-perfect code 
in~$H(n,q)$ exists 
if and only if for some
integer $k<nt$ and $m=nt-k$ there hold
\begin{itemize}
    \item[\rm(i)] 
    $
    \displaystyle
\frac{\mu q^n}{(q-1)n+1} = p^k,
$
\item[\rm(ii)] $m\ge t$ or 
$p^{t-m}$ divides $\mu$. 
\end{itemize}
Moreover, if {\rm(i)} holds, then {\rm(ii)}  is equivalent to 
\begin{itemize}
\item[\rm(ii')] $n \equiv 1 \bmod q$. 
\end{itemize}
\end{theorem}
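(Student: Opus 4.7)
The plan is to apply Theorem~\ref{th:iff} to reduce the question to the existence of a $(\mu-1,\mu)$-spread of $n$ $t$-multisubspaces of~$\GF{p}^m$, where $m=nt-k$ is the codimension of the $\GF{p}$-linear code. A $\mu$-fold $1$-perfect code has quotient matrix~\eqref{eq:LaMu} with $\lambda=\mu-1$, so the correspondence is direct, and the hypothesis of Theorem~\ref{th:iff} that the columns of the check matrix span~$\GF{p}^m$ is just the statement that $m$ is the true codimension.

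For necessity of (i), the sphere-packing identity gives $|C|\cdot((q-1)n+1)=\mu q^n$, and since an $\GF{p}$-linear code has cardinality a power of~$p$, say~$p^k$, condition~(i) follows. For necessity of (ii), I would argue structurally: if $m<t$, then any $t$-tuple $T_i$ of vectors in~$\GF{p}^m$ spans a subspace of dimension $r_i\leq m$, so every element in the support of the multisubspace $\mspan{T_i}$ appears with multiplicity $p^{t-r_i}$, a value divisible by~$p^{t-m}$. Summing over $i=1,\ldots,n$, the multiplicity~$\mu$ of every nonzero vector in $\mspan{T_1}\uplus\cdots\uplus\mspan{T_n}$ must then be divisible by~$p^{t-m}$.

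For sufficiency, the plan is to assemble the $(\mu-1,\mu)$-spread from the three building blocks in Proposition~\ref{p:sum}. The key arithmetic input is the congruence $\mu\equiv p^s\pmod{p^t-1}$, obtained from (i) by rewriting it as $\mu p^m=(q-1)n+1$ and using $p^m\equiv p^{t-s}\pmod{p^t-1}$; combined with $\mu\geq p^{t-m}\geq p^s$ (from $n\geq 1$), this makes $L=(\mu-p^s)/(p^t-1)$ a nonnegative integer. When $m\geq t$, I would take $\alpha=L$, $\beta=1$, $\gamma=L(p-1)$ in Proposition~\ref{p:sum}(b); direct substitution confirms the output has parameters $(\mu-1,\mu)$. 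When $m<t$ (so $s=t-m$), condition~(ii) gives $p^s\mid\mu$, and setting $\beta=\mu/p^s$ the congruence forces $(p^t-1)\mid(\beta-1)$, so $\alpha=(\beta-1)/(p^t-1)$ is a nonnegative integer and Proposition~\ref{p:sum}(a) produces the required spread. A short count confirms the total number of multisubspaces equals $n=(\mu p^m-1)/(p^t-1)$, and the spanning condition of Theorem~\ref{th:iff} holds because the $(p^s-1,p^s)$-ingredient, being present with multiplicity at least one, already covers every nonzero vector of~$\GF{p}^m$.

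Finally, the equivalence of (ii) and (ii') under (i) is essentially immediate: reducing $\mu p^m=(q-1)n+1$ modulo~$q$ shows $n\equiv 1\pmod q$ is equivalent to $p^t\mid\mu p^m$, which is automatic when $m\geq t$ and equivalent to $p^{t-m}\mid\mu$ when $m<t$. The main obstacle will be the sufficiency case analysis, where the scalars $\alpha,\beta,\gamma$ and the total count of multisubspaces must line up exactly with~$n$; the congruence $\mu\equiv p^s\pmod{p^t-1}$ is the single arithmetic fact that makes both regimes succeed.
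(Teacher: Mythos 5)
Your proposal is correct and follows essentially the same route as the paper: reduction to $(\mu-1,\mu)$-multispreads via Theorem~\ref{th:iff}, double counting for (i), the rank/multiplicity argument for the necessity of (ii), reduction of (i) modulo $q$ for the equivalence with (ii'), and the same choices of $\alpha,\beta,\gamma$ in Proposition~\ref{p:sum} for sufficiency. One small wobble: your justification of $L\ge 0$ via ``$\mu\ge p^{t-m}\ge p^{s}$'' is backwards when $m\ge t$ (there $p^{t-m}\le 1$); the correct one-liner is that $1\le p^{s}\le p^{t}-1$, so the least positive integer congruent to $p^{s}$ modulo $p^{t}-1$ is $p^{s}$ itself, whence $\mu\ge p^{s}$.
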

\begin{proof}
\emph{Necessity of {\rm(i)}.} 
If there is an 
$\GF{p}$-linear $\mu$-fold 
$1$-perfect code~$C$,
then, by the definition, each of~$q^n$
vertices of~$H(n,q)$ 
is at distance at most one from exactly 
$\mu$ codewords of~$C$. On the other hand,
each of~$|C|$ codewords has exactly 
$(q-1)n+1$ vertices 
at distance at most~$1$. 
By doublecounting, we get
$$ \mu q^n = ((q-1)n+1)|C|. $$
If $C$ is $\GF{p}$-linear, then $|C|=p^k$,
where~$k$ is its $p$-dimension. (i) follows.

\emph{Necessity of {\rm(ii)}.}
Each $\GF{p}$-linear code of $p$-dimension~$k$
is the kernel of an~$m\times nt$ matrix
$M(T_1, ..., T_n)$.
If the code is $\mu$-fold $1$-perfect,
then
by Theorem~\ref{th:iff}
$(\mspan{T_1}, \ldots , \mspan{T_n})$
is a $(\mu-1,\mu)$-multispread.
If, additionally, $t>m$, then
each $\mspan{T_i}$ is a multiset
with multiplicities~$p^{t-r_i}$,
where~$r_i$ is the rank 
of the corresponding submatrix.
Since $r_i\le m$, we see that
all the multiplicities, and hence~$\mu$,
are divisible by~$p^{t-m}$.

\emph{Equivalence of {\rm(ii)} and {\rm(ii')}.} Rewriting (i) as
$$ \mu p^m = q n-(n-1), $$ 
we see that $n\equiv 1 \bmod q$ 
if and only if $ \mu p^m $ is divisible by $q=p^t$. The last is obviously equivalent to (ii).

\emph{Sufficiency of {\rm(i)+(ii)}.}
We rewrite {\rm(i)} in the following form:
\begin{equation}\label{eq:pm1}
\mu p^m - 1 = n(p^t-1),  
\end{equation}

Firstly, consider the case $m\ge t$.
Denote by $s$ the number
from $\{0,\ldots,t-1\}$
such that $m+s$
is divisible by $t$.
We have
$$p^m  =
(p^{m+s-t}-1)p^{t-s} + p^{t-s}
\equiv p^{t-s} \bmod (p^t-1),$$
and
$$\mu p^m -1 \equiv
\mu p^{t-s} -1 \bmod (p^t-1).$$
From~\eqref{eq:pm1} we see that the
left part is divisible by $p^t-1$; 
hence,
\begin{equation}\label{eq:mut}
\mu p^{t-s} -1 \equiv 0
\bmod (p^t-1).
\end{equation}
The first $\mu$ that meets
\eqref{eq:mut} is $p^{s}$;
the general case is 
$\mu = p^{s} +\alpha (p^t-1)$,
$\alpha \in \{0,1,\ldots\}$.
A $(\mu-1,\mu)$-spread
is provided by Proposition~\ref{p:sum}(b),
where $\beta=1$ and 
$\gamma = (p-1)\alpha$.

It remains to consider the case
$m < t$. By {\rm(ii)},
$\mu/p^{t-m}$ is integer, 
and a
$(\mu-1,\mu)$-spread
is provided by
Proposition~\ref{p:sum}(a),
where $\beta = \mu/p^{t-m}$
and 
$\alpha = 
({\frac{\mu}{p^{t-m}} - 1})
/ (p^{t}-1)$.
The integrity of
$\alpha$
is derived from~\eqref{eq:mut},
where $t-s=m$: indeed,
$\frac{\mu}{p^{t-m}}p^t-1
\equiv 0 \bmod (p^t-1)$ 
implies  
$\frac{\mu}{p^{t-m}} 
\equiv 1 \bmod (p^t-1)$.
\end{proof}

\begin{theorem}\label{th:all}
Let $q=p^t$ for a prime $p$.
A $\mu$-fold $1$-perfect code 
in $H(n,q)$ exists 
if and only if 
\begin{itemize}
\item[\rm(i)]
$\mu < (q-1)n+1$,
\item[\rm(ii)] 
$\displaystyle
\frac{\mu q^n}{(q-1)n+1}
$
is integer, 
\item[\rm(iii)] and $n \equiv 1 \bmod q$. 
\end{itemize}
\end{theorem}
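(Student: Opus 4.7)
My plan is to handle necessity and sufficiency separately, reducing sufficiency to Theorem~\ref{th:add} via a coset construction.

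For necessity, each of (i)--(iii) should follow by a short standard argument. Double-counting pairs $(c,v)\in C\times\GF{q}^n$ with $d(c,v)\le 1$ gives $\mu q^n = |C|\cdot((q-1)n+1)$, which is~(ii). If $\mu=(q-1)n+1$ then every radius-$1$ ball would be entirely contained in~$C$, forcing $C=\GF{q}^n$ and contradicting that~$C$ is a proper subset, yielding~(i). Condition~(iii) is the Lloyd condition recalled in the introduction: the quotient matrix displayed there has $-1$ as its non-Perron eigenvalue, and every eigenvalue of an equitable partition of $H(n,q)$ must lie in the spectrum $\{-n+iq:0\le i\le n\}$ of its adjacency matrix, forcing $n\equiv 1\bmod q$.

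For sufficiency, the idea is to build~$C$ as a disjoint union of translates of an additive completely regular code supplied by Theorem~\ref{th:add}. From~(iii), writing $n=1+aq$ gives $(q-1)n+1 = q\cdot(1+a(q-1))$, so $q\mid (q-1)n+1$ and the $p$-adic valuation $c:=\nu_p((q-1)n+1)$ satisfies $c\ge t$. I would then factor $(q-1)n+1 = p^c M_0$ with $\gcd(M_0,p)=1$ and set $\mu':=M_0$, $k:=tn-c$. Then $m := nt-k = c\ge t$ and $\mu' q^n/((q-1)n+1) = p^k$, so the clause $m\ge t$ of Theorem~\ref{th:add} supplies an $\GF{p}$-linear $\mu'$-fold $1$-perfect code $C'\subset\GF{q}^n$ of size $p^k$.

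To finish, I would note that (ii) says $K := \mu q^n/((q-1)n+1) = (\mu/M_0)\,p^{tn-c}$ is an integer, which together with $\gcd(M_0,p)=1$ forces $M_0\mid\mu$; hence $\kappa := \mu/\mu'$ is a positive integer with $\kappa p^k = K$. Condition~(i) gives $K<q^n$, so $C'$ admits $\kappa$ pairwise distinct cosets in $\GF{q}^n$, and the union~$C$ of any such $\kappa$ cosets is the desired code: each translate $v+C'$ is itself a $\mu'$-fold $1$-perfect code by translation-invariance of the Hamming distance, and disjointness adds the $\mu'$-covers into a $\kappa\mu'=\mu$-cover. The only point that genuinely requires care is the initial observation that~(iii) forces $q\mid(q-1)n+1$ (and thus $c\ge t$), which is exactly what places the relevant additive code in the easy clause of Theorem~\ref{th:add}; everything else is bookkeeping.
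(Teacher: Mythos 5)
Your proof is correct and follows essentially the same route as the paper: necessity by double counting and Lloyd's condition, sufficiency by taking the union of $\kappa$ pairwise disjoint cosets of an additive code supplied by Theorem~\ref{th:add}. The only (harmless) difference is bookkeeping: you always use the additive code with $\mu'$ equal to the $p$-free part of $(q-1)n+1$ and push any remaining powers of $p$ into the number of cosets, whereas the paper takes $\kappa$ to be the $p$-free part of $\mu q^n/((q-1)n+1)$ so that the additive code absorbs the full $p$-power; both land in the $m\ge t$ clause of Theorem~\ref{th:add} via $n\equiv 1\bmod q$.
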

\begin{proof}
The necessity
of (i) and (ii)
is obvious and (iii)
is Lloyd's condition.

To show the sufficiency,
we assume that
$\displaystyle
\frac{\mu q^n}{(q-1)n+1}=\kappa p^k
$
for some integer $k$ and $\kappa$ such that
$\gcd(\kappa,p)=1$.
We see that $\mu$
is divisible by $\kappa$,
and by Theorem~\ref{th:add}
we can construct an additive 
$\frac\mu\kappa$-fold
$1$-perfect code~$C$.
Then,
the union of~$\kappa$ cosets
of~$C$
is a required $\mu$-fold
$1$-perfect code.
\end{proof}

\section{Conclusion}
Additive codes are natural algebraic generalization of linear codes
over $\GF{q}$ in the case when
$q$ is a prime power but not a prime.
In the current note, we have shown
that a multifold $1$-perfect code                                                                                    
of any admissible parameters can be constructed
as the union of cosets of an additive multifold $1$-perfect code.
Recently~\cite{SKS:drg},
another putative parameter set
of completely regular codes was resolved
with an additive code over~$\GF4$,
while the generalization of that code to 
a putative infinite series remains an open problem.
Additive codes have attracted a lot of attention in recent years, and, in particular,
the study of additive completely regular codes
becomes an interesting and promising line of research.
Some natural directions are characterization of parameters of multispreads
and constructing the geometrical 
theory of additive completely
regular codes of covering radius~$2$, similar
to~\cite{CalKan:86:2weight} for linear codes.

\subsection*{Declaration of competing interest, data availability}
The author declares that he has no known competing financial interests
or personal relationships that could have appeared to influence the work
reported in this paper.

No data was used or generated during the research described in the article.

%

\providecommand\href[2]{#2} \providecommand\url[1]{\href{#1}{#1}}
  \def\DOI#1{{\small {DOI}:
  \href{http://dx.doi.org/#1}{#1}}}\def\DOIURL#1#2{{\small{DOI}:
  \href{http://dx.doi.org/#2}{#1}}}

\end{document}